\newtheorem{theorem}{Theorem}
\newtheorem{lemma}[theorem]{Lemma}
\newtheorem{proposition}[theorem]{Proposition}
\newtheorem{corollary}[theorem]{Corollary}
\newtheorem{prop}{Proposition}
\newtheorem{thm}{Theorem}
\theoremstyle{definition}
\newtheorem{example}[theorem]{Example}
\theoremstyle{remark}
\newtheorem{remark}[theorem]{Remark}
\newcommand{\DD}{{\mathbb D}}
\DeclareMathOperator{\Aut}{Aut} 
 \DeclareMathOperator{\re}{Re}
\DeclareMathOperator{\im}{Im}
\renewcommand{\phi}{\varphi}
\subjclass[2020]{32F45}
\begin{document}

\title{Isometries in the diamond}


\address{Faculty of Mathematics and Computer Science, Jagiellonian
University,  \L ojasiewicza 6, 30-348 Krak\'ow, Poland}

\author{Anand Chavan}\email{anand.chavan@doctoral@uj.edu.pl}
\author{W\l odzimierz Zwonek}\email{wlodzimierz.zwonek@uj.edu.pl}

\thanks{Supported by the Preludium bis grant no. 2021/43/O/ST1/02111 of the National Science Centre,
Poland.}

\keywords{Isometries with respect to invariant distances and metrics, indicatrix, real and complex geodesics}

\begin{abstract} We show the (anti)holomorphicity of smooth Kobayashi isometries of the diamond, the domain defined as $\triangle:=\{z\in\mathbb C^2:|z_1|+|z_2|<1\}$. Additionally, we discuss the problem of uniqueness of real geodesics, left inverses and strict convexity of indicatrices.
\end{abstract}
\maketitle

\section{Introduction}
\subsection{Results} The problem whether the isometries (with respect to the Kobayashi distance/metric or more generally with respect to some holomorphically invariant distances or metrics) are (anti)holomorphic has been recently investigated very thoroughly. The most significant tool in this context is the Lempert Theory. And in the class of convex domains, where the Lempert Theorem holds, the most general result is due to Edigarian (see \cite{Edi 2019}) that gives the positive answer to this question for strictly convex domains. 

The trivial example of the bidisc shows that the positive answer is not possible without additional assumption. It is natural that we may try to study the problem for specific convex domains that are not strictly convex. In this context it is quite natural to consider the domain $\triangle$, called {\it diamond}, that is defined by the formula $\triangle:=\{z\in\mathbb C^2:|z_1|+|z_2|<1\}$.

The positive result in that direction paper is the following.

\begin{thm}\label{theorem:isometry} 
Let $F:\triangle\to\triangle$ be a $C^1$-smooth Kobayashi isometry. Then $F$ is holomorphic or antiholomorphic, in other words, up to a permutation of variables it is of the form $F(z)=(\omega_1z_1,\omega_2z_2)$, $z\in\triangle$ or $F(z)=(\omega_1\overline{z_1},\omega_2\overline{z_2})$, $z\in\triangle$ for some $|\omega_j|=1$, $j=1,2$.
\end{thm}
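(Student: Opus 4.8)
The plan is to exploit the rich supply of complex geodesics in the convex domain $\triangle$ and to show that $F$ preserves them, deducing holomorphicity from the differential at the very end. First I would record what Lempert's theorem gives for $\triangle$ (bounded and convex, hence complete hyperbolic): $k_\triangle=c_\triangle$; the Kobayashi--Royden metric $\kappa_\triangle$ equals the Carath\'eodory--Reiffen metric and is the infinitesimal form of $k_\triangle$; and through every point in every direction there is a complex geodesic admitting a holomorphic left inverse $\triangle\to\mathbb{D}$. Since $F$ is $C^1$, differentiating the isometry relation shows that $F$ is a diffeomorphism and that $dF_p$ carries the indicatrix $I_\triangle(p):=\{v\in\mathbb{C}^2:\kappa_\triangle(p;v)<1\}$ $\mathbb{R}$-linearly onto $I_\triangle(F(p))$, intertwining the Royden metrics; surjectivity of $F$ will follow at the end.

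The heart of the proof — and the step I expect to be the main obstacle — is to show that $F$ maps the image of every complex geodesic onto the image of a complex geodesic. An isometry maps maximal real geodesics of $(\triangle,k_\triangle)$ to maximal real geodesics, so I would first analyse the real geodesics of the diamond; this is exactly where the phenomena highlighted in the paper enter — uniqueness of real geodesics, the structure of holomorphic left inverses, and the strict-versus-flat convexity of $\partial\triangle$ and of the indicatrices. The goal is to establish enough about real geodesics of $\triangle$ — their uniqueness and their relation to complex geodesics — to guarantee that a maximal real geodesic is a diameter of a complex geodesic (essentially uniquely); the non-strict convexity of $\triangle$ along the circles $\{|z_j|=1\}\cap\{z_k=0\}$ is precisely what makes this delicate, being the potential source of real geodesics that fail to lie on complex geodesics. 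Granting this, for a complex geodesic $\varphi$ the curve $t\mapsto F(\varphi(\alpha(t)))$ is, for every automorphism $\alpha$ of $\mathbb{D}$, a real geodesic of $\triangle$, hence a diameter of a complex geodesic; assembling these one finds that $F$ sends $\varphi(\mathbb{D})$ into the image of a single complex geodesic $\psi$, and then $\psi^{-1}\circ F\circ\varphi$ is a $C^1$ self-isometry of the Poincar\'e disc, hence M\"obius or anti-M\"obius. Thus $F\circ\varphi=\psi\circ h$ with $h$ M\"obius or anti-M\"obius.

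Next I would globalise the alternative. The family of complex geodesics of $\triangle$ is connected, and by continuity of $F$ the type of the reparametrization $h$ (M\"obius versus anti-M\"obius) is locally constant along it, hence constant. Composing $F$, if necessary, with the anti-automorphism $z\mapsto\bar z$ of $\triangle$ (which preserves $k_\triangle$), I may assume that $\zeta\mapsto F(\varphi(\zeta))$ is \emph{holomorphic} for every complex geodesic $\varphi$.

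Finally, fix $p\in\triangle$ and write $dF_p=A_p+B_p$ with $A_p$ complex linear and $B_p$ conjugate linear. For a complex geodesic $\varphi$ with $\varphi(0)=p$ the Wirtinger chain rule gives $\partial_{\bar\zeta}(F\circ\varphi)(0)=B_p(\varphi'(0))$, which vanishes because $F\circ\varphi$ is holomorphic; since complex geodesics through $p$ realise every direction, $\{\varphi'(0)\}$ runs over a set spanning $\mathbb{C}^2$, so $B_p=0$. Hence $\bar\partial F\equiv0$, so $F$ is holomorphic; an injective holomorphic Kobayashi isometry of the bounded domain $\triangle$ is an automorphism, and $\Aut(\triangle)$ consists precisely of the maps $(z_1,z_2)\mapsto(\omega_1 z_{\sigma(1)},\omega_2 z_{\sigma(2)})$ with $|\omega_j|=1$, $\sigma\in S_2$. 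Undoing the conjugation from the previous paragraph yields their antiholomorphic counterparts, which proves the theorem. The single genuinely hard ingredient is the analysis of real geodesics in the non-strictly-convex domain $\triangle$.
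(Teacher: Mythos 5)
Your overall architecture (pin down the structure of real geodesics, deduce that $F$ permutes images of complex geodesics, reduce to a one-variable isometry of the Poincar\'e disc, finish with Wirtinger calculus) works in strictly convex domains, but the step you yourself single out as the heart of the argument is not merely delicate in $\triangle$: it is false. Not every maximal real geodesic of $\triangle$ lies on a complex geodesic. The curve $\gamma(t)=(t,0)$ for $t\in(-1,0]$, $\gamma(t)=(0,t)$ for $t\in[0,1)$ is a real geodesic (it admits the holomorphic left inverse $z_1+z_2$) and is not the trace of any complex geodesic; more generally, by the equivalence between uniqueness of right inverses, strict convexity of the indicatrix, and ``all real geodesics through $p$ come from complex ones,'' such broken geodesics exist through every point $p$, because $I_{\triangle}(p)$ is never strictly convex (at $p=0$ it equals $\triangle$ itself). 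Consequently, knowing that $F$ maps real geodesics to real geodesics does not let you conclude that $F(\varphi(\mathbb D))$ lies in the image of a single complex geodesic: the image real geodesics could be broken ones, and different diameters of $\varphi(\mathbb D)$ could a priori land on different complex discs.

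That this gap is essential, and not a matter of presentation, is shown by the bidisc: $(z_1,z_2)\mapsto(z_1,\overline{z_2})$ is a Kobayashi isometry of $\mathbb D^2$ which sends the complex geodesic $\lambda\mapsto(\lambda,a\lambda)$, $a\neq 0$, to a set that is not the image of any complex geodesic, even though it maps real geodesics to real geodesics and the space of complex geodesics of $\mathbb D^2$ is connected. So both your key claim and the subsequent globalization (``the type of $h$ is locally constant, hence constant'') would, if valid as stated, prove a false theorem for $\mathbb D^2$; any correct argument must use features of $\triangle$ beyond general Lempert theory. This is why the paper proceeds differently: it first proves $F(0,0)=(0,0)$ by comparing the loci of strictly convex boundary points of the indicatrices at $0$ and at a putative image point (via the ellipsoid $\mathcal E(1,1/2)$), then shows that $F$ preserves the axes and the moduli $|z_1|,|z_2|$, deduces that $F$ is real linear, and finally excludes the mixed case $F(z)=(z_1,\overline{z_2})$ by pulling back to $\mathbb B_2$ through $z\mapsto(z_1^2,z_2^2)$ and invoking real analyticity of the Kobayashi distance of the ball together with the identity principle. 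To salvage your route you would need, at a minimum, a classification of the real geodesics of $\triangle$ together with a separate argument that $F$ cannot send an unbroken one to a broken one; the paper's Step 2 carries out a version of this for the axes only, exploiting that $\partial\triangle$ is strictly convex exactly at the points $(\omega,0)$, $(0,\omega)$.
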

Recall that the complete description of holomorphic automorphisms of $\triangle$ goes back to N. Kritikos (see \cite{Kri 1927}) so the above theorem may be seen as a direct generalization of this classical result.

Though we give the ad hoc proof of the above theorem we utilize some results on strict convexity of indicatrices, possible forms of real geodesics or the property of uniqueness of right inverses that are formulated in a more general context and may well be used in other situations. These results of a more general character are presented in Section~\ref{section:uniqueness-geodesics}. In particular, a close relation between the uniqueness of real geodesics and property of the uniqueness of right inverses and strict convexity of indicatrices is presented there. 

At this place let us also draw Reader's attention to a very recent result of Edigarian that states that Kobayashi isometries of the symmetrized bidisc $\mathbb G_2$ are holomorphic (or antiholomorphic), too. The symmetrized bidisc though not convex (and even not biholomorphic to a convex domain) satisfies the Lempert Theorem and additionally the indicatrix at $(0,0)$ is (up to linear isomorphism) the domain $\triangle$ that is the indicatrix of $\triangle$ at $(0,0)$, too (see \cite{Agl-You 2001}, \cite{Cos 2004}, \cite{Agl-You 2004}). 

\subsection{Motivation and background} 
The general problem that may be seen as the motivation for our considerations is the following. Consider $F:M\to N$ with $M,N$ being Kobayashi hyperbolic manifolds being the isometry with respect to the Kobayashi metric or distance. Under which assumptions $F$ is necessarily holomorphic or antiholomorphic? The more concrete problems may require additionally that $M$ and $N$ have the same dimension or even $M=N$.

As already mentioned in the case of convex domains the most general positive result goes back to Edigarian (see \cite{Edi 2019}) who gave the positive answer in the case of strictly convex domains. Edigarian extended earlier results (see for instance \cite{Gau-Ses 2013}). The first positive results in this direction could be found in \cite{Kuc-Ray 1988}, \cite{Zwo 1993}, \cite{Zwo 1995}. More recently the case of Cartesian products/bounded symmetric domains/strongly pseudoconvex domains was studied among others in \cite{Lemm 2022}, \cite{Ant 2017}, \cite{Kim-Seo 2022}, \cite{Mah 2012}, \cite{Ses-Ver 2006}. A quite general conjecture in this context is Conjecture 5.2 from \cite{Gau-Ses 2017} that states that the Kobayashi isometries are (anti)holomorphic provided the manifolds $M$ and $N$ are not Cartesian products. Finally, the most recent result is contained in \cite{Edi 2024} where the studied domain is the symmetrized bidisc.

Apart from the rigidity result for the Kobayashi isometries of $\triangle$ we present in the paper some auxiliary results that are formulated in a more general setting and that refer to the uniqueness of right and left inverses and strict convexity of indicatrices. Note that the latter could play some role in a better understanding of the theory of extension sets admitting norm preserving extensions of bounded holomorphic functions. The complete description of extension sets was first settled for the bidisc in \cite{Agl-McC 2003} and was later developed.  It should be mentioned that in the symmetrized bidisc one could find the first extension set being not a retract (see \cite{Agl-Lyk-You 2019}). Similar phenomenon was recently observed in the domain $\triangle$ (see \cite{Agl-Kos-McC 2023}). That indicates that the better understanding of the domain $\triangle$ could help in the further study of the theory of extension sets, too. 

\section{Lempert domains. Complex and real geodesics. Left inverses. Indicatrices.}\label{section:uniqueness-geodesics} 
Though some of the results presented below will be true in a more general context we mostly reduce to the case when the domains under considerations satisfy the Lempert Theorem. We need therefore to introduce suitable definitions and notions. For a domain $D\subset\mathbb C^n$ we define
holomorphically invariant functions. Let $w,z\in D$.

{\it The Carath\'eodory (pseudo)distance} is defined as follows 
$$c_D(z,w):= \sup \{p(F(z),F(w)): F \in \mathcal{O}(D,\mathbb{D})\}$$
    where $p$ is the Poincar\'e distance on the unit disc $\mathbb D\subset\mathbb C$. 

{\it The Lempert function} is defined as
    $$l_D(z,w):=\inf\{p(\sigma, \zeta): f(\sigma)=z \text{ and } f(\zeta)=w \text{ where }f\in\mathcal{O}(\mathbb{D},D)\}.$$

    {\it The Kobayashi (pseudo)distance $k_D$} is the largest (pseudo)distance not greater than Lempert function. We have the inequalities
    $c_D\leq k_D\leq l_D$.

On the other side, for $p\in D $ and $X \in \mathbb{C}^n$ we define {\it the Caratho\'edory (pseudo)metric} as:
    $$\gamma_D(p;X):=\sup\{|F'(p) X|: F\in \mathcal{O}(D,\mathbb{D}) \text{ and } F(p)=0  \}.$$

{\it The Kobayashi (pseudo)metric} is defined as:
    $$\kappa_{D}(p;X):=\inf\{|\alpha|: \alpha f'(0)=X \text{ for } f \in \mathcal{O}({\mathbb{D},D})\text{ and } f(0)=p\}.$$
    Recall that $\gamma_D\leq \kappa_D$.
    
 We call the domain $D\subset\mathbb C^n$ {\it Lempert domain} if $D$ is taut and the Lempert Theorem holds on $D$, i. e. if we have the equalities
\begin{equation}
    l_D\equiv k_D\equiv c_D,\; \kappa_D\equiv \gamma_D.
\end{equation}

Recall that a holomorphic mapping $f:\mathbb D\to D$ is called a {\it complex geodesic } if there is a holomorphic function $F:D\to\mathbb D$ such that $F\circ f$ is an automorphism. We call such an $F$ the {\it left inverse to $f$}. Composing the function $F$ with an automorphism of $\mathbb D$ we may, without loss of generality, additionally assume that $F\circ f$ is the identity - this lets us easier formulate different results on uniqueness. Recall that for any pair of different points $w,z$ in the Lempert domain $D$ there is a complex geodesic passing through them. In the infinitesimal case we know that for any pair $(z,X)\in D\times\mathbb C^n$ there is a complex geodesic $f:\mathbb D\to D$ such that $f(0)=z$ and $X$ is paralel to $f^{\prime}(0)$. 

Consequently, for the Lempert domain $D\subset\mathbb C^n$ we identify complex geodesics $f$ and $g$ by the relation $f=g\circ a$, where $a$ is an automorphism of $\mathbb D$. Similarly, we also identify the left inverses $F,G$ of a given geodesic by the relation $F=a\circ G$ for some automorphism $a$ of $\mathbb D$.

We shall recall basic results that are due to Lempert (see \cite{Lem 1981}, \cite{Lem 1984}) that state that bounded convex domains or strongly linearly convex domains are Lempert domains. More recent results show that the symmetrized bidisc $\mathbb G_2$, the tetrablock $\mathbb E$, or more generally domains $\mathbb L_n$ are Lempert domains, too (see \cite{Cos 2004}, \cite{Agl-You 2004}, \cite{Abo-You-Whi 2007}, \cite{Edi-Kos-Zwo 2013}, \cite{Gho-Zwo 2023}, \cite{Edi 2023}). As to the best reference on that theory, definitions and notations mentioned above we recommend the monograph \cite{Jar-Pfl 2013}. 

\subsection{Real geodesics}
The mapping $\gamma:(-1,1)\to D$ is called a {\it real geodesic} if $p(t,s)=k_D(\gamma(t),\gamma(s))$, $t,s\in(-1,1)$. Recall that sometimes the real geodesics may be understood as obtained from above by suitable composition with diffeomorphisms of intervals. More precisely, we could call $\gamma:\mathbb R\to D$ a real geodesic if $k_D(\gamma(t),\gamma(s))=|t-s|$, $s,t\in\mathbb R$ but in our paper we stick to the first defintion.

Note that if $f:\mathbb D \to D$ is a complex geodesic then $f_{|(-1,1)}$ is a real geodesic.



 Consequently for any Lempert domain $D\subset\mathbb C^n$ and any $p,q\in D$ there is a real geodesic passing through $p$ and $q$ -- the one induced by a complex geodesic.

The existence of (holomorphic) left inverses to geodesics in Lempert domains may be generalized to real geodesics. The result seems to be a folklore nevertheless for the sake of completeness we present its proof below.

\begin{remark} If $D\subset\mathbb C^n$ is a Lempert domain then real geodesics have holomorphic left inverses or if $\gamma:(-1,1)\to D$ is a real geodesic then 
there is a holomorphic function $F:D \to\mathbb D$ such that $F(\gamma(s))=s$, $s\in(-1,1)$. 

In fact, for $t\in(0,1)$ we find a holomorphic $F_t:D\to\mathbb D$ such that $F_t\circ \gamma(\pm t)=\pm t$. Then for $s\in(-t,t)$ we get
\begin{equation}
 p(-t,s)=k_D(\gamma(-t),\gamma(s))\geq c_D(\gamma(-t),\gamma(s))\geq p(-t,F_t(\gamma(s))).
\end{equation}

Similarly we get $p(s,t)\geq p(F_t(\gamma(s)),t)$. But
\begin{equation}
p(-t,t)=p(-t,s)+p(s,t)\geq p(-t,F_t(\gamma(s)))+p(F_t(\gamma(s)),t)\geq p(-t,t),
\end{equation}
which altogether implies that $F_t(\gamma(s))=s$, $-t<s<t$.

Now we make use of the Montel theorem to get a holomorphic limit $F:D\to\mathbb D$ of a subsequence $(F_{t_k})_k$ with $F(\gamma(s))=s$, $s\in(-1,1)$ which finishes the proof.


\end{remark}

\begin{remark} We may prove the following fact: if $D$ is a Lempert domain, $\gamma:[t_1,t_2]\to D$ is a {\it geodesic interval}, i.e. $-1<t_1<t_2<1$ and $k_D(\gamma(t),\gamma(s))=p(t,s)$, $t,s\in [t_1,t_2]$, then $\gamma$ extends to a real geodesic. Actually, let $F$ be a left inverse to a complex geodesic $f$ passing through $\gamma(t_1)$ and$\gamma(t_2)$ and such that $f(t_1)=\gamma(t_1)$, $f(t_2)=\gamma(t_2)$ (consequently, $F(\gamma(t))=t$, $t\in (-1,t_1]\cup [t_2,1)$. Extend the geodesic interval by the formula $\gamma(t):=f(t)$, $t\in(-1,t_1)\cup(t_2,1)$. Then one may easily see that $\gamma$ is a real geodesic. Actually, if $-1<s_1\leq t_1<t_2\leq s_2<1$ then 
\begin{equation}
k_D(\gamma(s_1),\gamma(s_2))=k_D(f(s_1),f(s_2))=p(s_1,s_2).
\end{equation}
On the other hand for $-1<t_1 < s< t_2<1$ we get
\begin{multline}
p(t_1,t_2)=p(F(\gamma(t_1)),F(\gamma(t_2))\leq p(t_1,F(\gamma(s)))+p(F(\gamma(s)),t_2)\leq\\
k_D(\gamma(t_1),\gamma(s))+k_D(\gamma(s),\gamma(t_2))=p(t_1,s)+p(s,t_2)=p(t_1,t_2)
\end{multline}
from which we esily get that $F(\gamma(s))=s$ for all $t_1<s<t_2$ (and so for all $s\in(-1,1)$). Consequently, for $-1<s_1<t_1<s_2<t_2<1$ we get
\begin{multline}
p(s_1,s_2)\leq k_D(\gamma(s_1),\gamma(s_2))\leq k_D(\gamma(s_1),\gamma(t_1))+k_D(\gamma(t_1),\gamma(s_2))=\\
 p(F(f(s_1)),F(f(t_1)))+p(t_1,s_2)=p(s_1,s_2),
\end{multline}
which finishes the desired claim
\end{remark}

\subsection{Property of uniqueness of right inverses and real geodesics}
Recall that in the paper \cite{Kos-Zwo 2016} the following problem was studied (and solved for strongly linearly convex domains, symmetrized bidisc and the tetrablock): having given a complex geodesic $\varphi$ in the Lempert domain $D$ decide when there is a uniquely determined left inverse $F$. The results below suggest that a kind of a dual problem is interesting, too.

Namely, fix a point $w$ in the Lempert domain $D$. Decide when the extremal function $F:D\to\mathbb D$, $F(w)=0$ for the Carath\'eodory problem for some pair $(w,z)\in D\times D$ (or $(w;X)\in D\times \mathbb C^n$) has only one complex geodesic $f:\mathbb D\to D$ such that $f(0)=p$ with the property that $F$ is a left inverse for $f$. We formulate the property formally below.

We say that the Lempert domain $D$ has {\it uniqueness property of right inverses at $w$} if for any function $F:D\to\mathbb D$ such that $F(w)=0$ there is at most one $f:\mathbb D\to D$ such that $f(0)=w$ and $F\circ f$ is the identity on $\mathbb D$. Note that the uniqueness property of right inverses at $w$ implies the uniqueness of complex geodesics passing through $w$ (both in the standard and infinitesimal case). 

The interest in considering this problem may arise from the following result.

\begin{proposition}
Let $D$ be a Lempert domain, $w\in D$. Then all the real geodesics passing through $w$ are induced by complex ones if and only if $D$ has the uniqueness property of right inverses at $w$.
\end{proposition}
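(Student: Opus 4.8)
The plan is to prove the two implications directly; the only genuine ingredients are the existence of a holomorphic left inverse to a real geodesic (the first Remark above), the rigidity of disc self-maps with two fixed points (Schwarz--Pick), and the identity principle for holomorphic maps.

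First I would treat the implication \emph{uniqueness of right inverses at $w$ $\Longrightarrow$ every real geodesic through $w$ is induced by a complex one}. Let $\gamma:(-1,1)\to D$ be a real geodesic through $w$. After precomposing with the restriction to $(-1,1)$ of a disc automorphism (which preserves the class of real geodesics induced by complex ones) I may assume $\gamma(0)=w$. By the first Remark there is $F\in\OO(D,\mathbb D)$ with $F(\gamma(s))=s$ for all $s$, hence $F(w)=0$. For each $t\in(-1,1)\setminus\{0\}$, since $k_D(w,\gamma(t))=k_D(\gamma(0),\gamma(t))=p(0,t)$, the Lempert property provides a complex geodesic through $w$ and $\gamma(t)$ which, after reparametrization, is a complex geodesic $g_t:\mathbb D\to D$ with $g_t(0)=w$ and $g_t(t)=\gamma(t)$. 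Then $F\circ g_t$ is a holomorphic self-map of $\mathbb D$ fixing the two distinct points $0$ and $t$, so $F\circ g_t=\id$ by Schwarz--Pick; thus $F$ is a left inverse of $g_t$, and the uniqueness property of right inverses at $w$ forces $g_t$ to be one and the same map $f$ for every $t$. Consequently $\gamma(t)=g_t(t)=f(t)$ for all $t\in(-1,1)$, i.e. $\gamma=f|_{(-1,1)}$ is induced by a complex geodesic.

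For the converse, assume every real geodesic through $w$ is induced by a complex one, take $F\in\OO(D,\mathbb D)$ with $F(w)=0$, and let $f_1,f_2:\mathbb D\to D$ be complex geodesics with $f_i(0)=w$ and $F\circ f_i=\id$. I would glue them into $\gamma:(-1,1)\to D$ by setting $\gamma|_{[0,1)}=f_1|_{[0,1)}$ and $\gamma|_{(-1,0]}=f_2|_{(-1,0]}$ (consistent at $0$ since $f_1(0)=w=f_2(0)$), and then check that $\gamma$ is a real geodesic through $w$. For $s,t$ on the same side of $0$ this is immediate because the restriction of a complex geodesic to $(-1,1)$ is a real geodesic; for $-1<s<0<t<1$ the triangle inequality gives $k_D(\gamma(s),\gamma(t))\le k_D(f_2(s),w)+k_D(w,f_1(t))=p(s,0)+p(0,t)=p(s,t)$, while $k_D(\gamma(s),\gamma(t))\ge p(F(\gamma(s)),F(\gamma(t)))=p(s,t)$ because $F\circ f_1=F\circ f_2=\id$. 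By hypothesis $\gamma=g|_{(-1,1)}$ for some complex geodesic $g$; since $g$ then agrees with $f_1$ on $[0,1)$ and with $f_2$ on $(-1,0]$, both sets with accumulation points in $\mathbb D$, the identity principle yields $f_1=g=f_2$. This is exactly the uniqueness property of right inverses at $w$.

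The main obstacle, such as it is, is the gluing step in the converse: one must verify that the concatenated curve is genuinely a real geodesic, and the delicate part — the lower bound for $k_D(\gamma(s),\gamma(t))$ when $s$ and $t$ lie on opposite sides of the gluing point — is precisely where the hypothesis that $f_1$ and $f_2$ share the left inverse $F$ is indispensable. Everything else (the reparametrizations of complex geodesics, the additivity $p(s,0)+p(0,t)=p(s,t)$ along the real axis, Schwarz--Pick, and the identity principle) is routine.
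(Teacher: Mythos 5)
Your proof is correct and follows essentially the same route as the paper: the forward implication via a common left inverse, Schwarz--Pick applied to $F\circ g_t$, and the uniqueness hypothesis; the converse by gluing two right inverses of a common $F$ into a broken real geodesic and checking the geodesic property with the triangle inequality and the lower bound $k_D\ge p(F(\cdot),F(\cdot))$. The only (welcome) difference is that you make explicit the identity-principle step and the normalization $\gamma(0)=w$, which the paper leaves implicit.
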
 
\begin{proof}
Assume that $f,g:\mathbb D\to D$ are two different complex geodesics such that $f(0)=g(0)=w$ and $F$ is the left inverse to both $f$ and $g$ or $F(f(\lambda))=F(g(\lambda))=\lambda$.
Define the curve:
$\gamma(t):=f(t)$ for $-1<t\leq 0$ and $\gamma(t):=g(t)$ for $1>t\geq 0$.

Then $F(\gamma(t))=t$ and consequently $\gamma$ is a real geodesic that does not come from the complex geodesic. Actually, the non-trivial condition is verified as follows
\begin{multline}
p(t_1,t_2)\leq k_D(f(t_1),g(t_2))\leq k_D(f(t_1),f(0))+k_D(g(0),g(t_2))=\\
p(t_1,0)+p(0,t_2)=p(t_1,t_2)
\end{multline}
for $-1<t_1<0<t_2<1$.

Now we prove the opposite direction. Let $\gamma$ be a real geodesic such that $\gamma(0)=w$. Its left inverse $F$ is a Carath\'eodory extremal for any pair $(\gamma(0),\gamma(t))$. Let $f_t:\mathbb D\to D$, $t\in (-1,1)\setminus\{0\}$ be a complex geodesic with $f_t(0)=p$ and $f_t(t)=\gamma(t)$. Then $F(f_t(0))=0$ and $F(f_t(t))=t$ so the Schwarz Lemma implies that $F(f_t(\lambda))=\lambda$, $\lambda\in\mathbb D$ so the uniqueness of right inverses at $w$ implies that $f_t$ are all equal and consequently $\gamma$ is induced by a complex geodesic.
\end{proof}

\begin{proposition}
Let $D\subset\mathbb C^n$ be a Lempert domain. Assume that $w\in D$.
Let $f,g:\mathbb D\to D$ be complex geodesics passing through $w$ and such that $f(0)=g(0)=w$. 

Then the geodesics $f,g$ have one common left inverse $F:D\to\mathbb D$ (i.e $F\circ f$ and $F\circ g$ are identities) iff $\kappa_D(w;tf^{\prime}(0)+(1-t)g^{\prime}(0))=1$ for any $t\in[0,1]$.
\end{proposition}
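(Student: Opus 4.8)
The plan is to reduce both sides of the equivalence to first-order data at $w$ and then return to the metric definitions, using repeatedly that on a Lempert domain $\kappa_D(w;\cdot)=\gamma_D(w;\cdot)$ and that the latter is a seminorm (a supremum of absolute values of linear functionals), hence in particular convex and subadditive. Throughout put $X:=f'(0)$, $Y:=g'(0)$ and $Y_t:=tX+(1-t)Y$. Since $f$ and $g$ are complex geodesics through $w$ with $f(0)=g(0)=w$, each admits a left inverse and is itself a competing disc, so $\kappa_D(w;X)=\kappa_D(w;Y)=1$, and by the Lempert Theorem on $D$ also $\gamma_D(w;X)=\gamma_D(w;Y)=1$.

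For the implication ``common left inverse $\Rightarrow$ $\kappa_D(w;Y_t)\equiv 1$'': if $F$ is a common left inverse then $F(w)=0$ and $F\circ f=F\circ g=\id_{\mathbb D}$, so differentiating at $0$ gives $F'(w)X=F'(w)Y=1$, hence $F'(w)Y_t=1$ for every $t$. On one hand $\gamma_D(w;Y_t)\ge|F'(w)Y_t|=1$; on the other hand subadditivity gives $\gamma_D(w;Y_t)\le t\gamma_D(w;X)+(1-t)\gamma_D(w;Y)=1$. Thus $\gamma_D(w;Y_t)=1$, and the Lempert equality turns this into $\kappa_D(w;Y_t)=1$.

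For the converse I would use the hypothesis only at $t=\tfrac12$; this is no loss, since $t\mapsto\kappa_D(w;Y_t)$ is convex with value $1$ at $t=0,1$, so the value $1$ at $t=\tfrac12$ already forces it on all of $[0,1]$. From $\kappa_D\bigl(w;\tfrac12(X+Y)\bigr)=1$ and the existence of complex geodesics on Lempert domains, choose a complex geodesic $h:\mathbb D\to D$ with $h(0)=w$ and $h'(0)=\tfrac12(X+Y)$ \emph{exactly} (which is precisely what the value $1$ buys, rather than mere parallelism), and let $F$ be a left inverse of $h$. Then $F(w)=0$ and $F'(w)\bigl(\tfrac12(X+Y)\bigr)=1$, i.e.\ $a+b=2$ with $a:=F'(w)X$, $b:=F'(w)Y$. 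Since $F:D\to\mathbb D$ and $F(w)=0$ we have $|a|\le\gamma_D(w;X)=1$ and $|b|\le 1$, so $\re a+\re b=2$ forces $\re a=\re b=1$ and hence $a=b=1$. Finally $F\circ f:\mathbb D\to\mathbb D$ satisfies $(F\circ f)(0)=0$ and $(F\circ f)'(0)=a=1$, so by the Schwarz lemma $F\circ f=\id_{\mathbb D}$; likewise $F\circ g=\id_{\mathbb D}$, so $F$ is the desired common left inverse.

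The proposition is soft, so I do not expect a genuinely hard step. The two places that need care are (i) invoking the convexity/subadditivity of $\gamma_D(w;\cdot)$ and the identification $\kappa_D=\gamma_D$ at exactly the right moments, and (ii) in the converse, extracting a complex geodesic $h$ with $h'(0)=\tfrac12(X+Y)$ on the nose (using $\kappa_D\bigl(w;\tfrac12(X+Y)\bigr)=1$). Granting those, the elementary remark that $|a|,|b|\le 1$ together with $a+b=2$ forces $a=b=1$, followed by the Schwarz lemma, closes both directions.
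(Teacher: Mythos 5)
Your proof is correct and follows essentially the same route as the paper: the forward direction combines the lower bound $|F'(w)Y_t|=1$ with subadditivity of $\gamma_D(w;\cdot)=\kappa_D(w;\cdot)$, and the converse takes a left inverse $F$ of the geodesic $h$ with $h'(0)=\tfrac12(f'(0)+g'(0))$ and forces $F'(w)f'(0)=F'(w)g'(0)=1$, then applies the Schwarz lemma. Your added remarks (that the hypothesis at $t=\tfrac12$ suffices by convexity, and that $\kappa_D(w;\cdot)=1$ is exactly what guarantees $h'(0)=\tfrac12(f'(0)+g'(0))$ on the nose) are correct refinements of the same argument.
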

\begin{proof} Assume the existence of one left inverse $F$.
Let $h:\mathbb D\to D$ be a complex geodesic for the pair $(w;t\phi^{\prime}(0)+(1-t)\psi^{\prime}(0)=:X_t)$ for some $t\in[0,1]$ with $h(0)=w$. Then we have the following inequalities
\begin{multline}
1=t\kappa_D(w;f^{\prime}(0))+(1-t)\kappa_D(w,g^{\prime}(0))\geq \kappa_D(w;X_t)\geq\\
\kappa_{\mathbb D}(0;(F^{\prime}(w)(h^{\prime}(0)))=|t(F\circ f)^{\prime}(0)+(1-t)(F\circ g)^{\prime}(0)|=1,
\end{multline}
which finishes the proof.

To show the opposite implication let $F$ be a left inverse for the complex geodesic $h:\mathbb D\to D$ with $h(0)=w$, $h^{\prime}(0)=(f^{\prime}(0)+g^{\prime}(0))/2$. Then $|F^{\prime}(0)(f^{\prime}(0))|,|F^{\prime}(0)(g^{\prime}(0))|\leq 1$ and
\begin{multline}
1=\kappa_D(w;h^{\prime}(0))=1/2|F^{\prime}(0)(f^{\prime}(0)+g^{\prime}(0))|\leq \\
1/2(|(F\circ f)^{\prime}(0)|+|(F\circ g)^{\prime}(0)|)\leq 1.
\end{multline}
The equality above holds iff $F\circ f$, $F\circ g$ and $F\circ h$ are the same rotations which finishes the proof.
\end{proof}

Denote by $I_D(p):=\{X\in\mathbb C^n:\kappa_D(p;X)<1\}$ the {\it indicatrix of $D$ with the centre at $p\in D$}).
    

\begin{corollary}\label{corollary:uniqueness-real-geodesics}
Let $D$ be a Lempert domain. Fix $p\in D$. Then the following are equivalent
\begin{itemize}
\item all the real geodesics passing through $p$ are induced by complex geodesics,\\
\item $D$ has uniqueness property of right inverses at $p$,\\
\item $I_D(p)$ is strictly convex and for any $X$ with $\kappa_D(p;X)=1$ there is only one complex geodesic for the pair $(p;X)$.
\end{itemize}
\end{corollary}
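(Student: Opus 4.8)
The plan is to prove the corollary by assembling the two preceding propositions with a few small additional observations, organizing the three conditions into a cycle of implications. The equivalence of the first two bullets is exactly the first proposition, so the real work is relating the third bullet to the other two. I would show ``uniqueness of right inverses at $p$'' $\Rightarrow$ third bullet $\Rightarrow$ ``all real geodesics through $p$ are induced by complex ones'', closing the loop.

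For the implication from uniqueness of right inverses to the third bullet: suppose $I_D(p)$ fails to be strictly convex, i.e. there are $X,Y$ on its boundary (so $\kappa_D(p;X)=\kappa_D(p;Y)=1$, $X\ne Y$ not positive multiples of each other) with $\kappa_D(p;tX+(1-t)Y)=1$ for all $t\in[0,1]$. Let $f,g$ be complex geodesics for $(p;X)$ and $(p;Y)$ with $f(0)=g(0)=p$; rescaling $X,Y$ so that $f'(0)=X$, $g'(0)=Y$. Then the second proposition (its ``if'' direction) produces a common left inverse $F$ for $f$ and $g$, and since $f\ne g$ this contradicts uniqueness of right inverses at $p$. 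Similarly, if there were two distinct complex geodesics $f,g$ for the same $(p;X)$ with $\kappa_D(p;X)=1$, we may normalize $f'(0)=g'(0)=X$; then trivially $\kappa_D(p;tf'(0)+(1-t)g'(0))=\kappa_D(p;X)=1$ for all $t$, so again the second proposition gives a common left inverse, contradicting uniqueness of right inverses. Hence uniqueness of right inverses forces both halves of the third bullet. I should double-check the normalization point: ``uniqueness of right inverses at $p$'' is about geodesics with $f(0)=p$ and $F\circ f=\id_{\mathbb D}$, while complex geodesics are only determined up to reparametrization by $\Aut(\mathbb D)$ fixing nothing in particular; but composing with a rotation we can always arrange $f'(0)$ to be any prescribed vector of the right Kobayashi length, and a rotation fixes $0$, so two geodesics through $p$ that are genuinely different remain different after this normalization — that is the only subtlety and it is routine.

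For the reverse implication (third bullet $\Rightarrow$ all real geodesics through $p$ are induced by complex ones), I would in fact prove the contrapositive against uniqueness of right inverses, reusing the first proposition's argument. If uniqueness of right inverses fails at $p$, there are distinct complex geodesics $f,g$ with $f(0)=g(0)=p$ sharing a left inverse $F$ with $F\circ f=F\circ g=\id$. Then $(F\circ f)'(0)=(F\circ g)'(0)=1$, so $F'(p)f'(0)=F'(p)g'(0)=1$; by the ``only if'' direction of the second proposition $\kappa_D(p;tf'(0)+(1-t)g'(0))=1$ for all $t\in[0,1]$. If $f'(0)$ and $g'(0)$ are not positive multiples of each other this directly violates strict convexity of $I_D(p)$; if they are, then after normalizing we have $f'(0)=g'(0)$, two distinct complex geodesics for the same infinitesimal datum $(p;f'(0))$ with $\kappa_D=1$, violating the uniqueness clause of the third bullet. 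Either way the third bullet fails, which is what we want.

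The main obstacle I anticipate is not any deep argument but bookkeeping around the normalization of complex geodesics and left inverses up to $\Aut(\mathbb D)$: one must be careful that ``two distinct complex geodesics through $p$'' remains meaningful after fixing $f(0)=p$ and prescribing $f'(0)$, and that a shared left inverse can consistently be taken with $F(p)=0$ and $(F\circ f)'(0)$ real positive. Once these normalizations are pinned down, every step reduces to a direct citation of the first proposition, of both directions of the second proposition, and of the Schwarz lemma; no further estimates are needed. I would therefore present the proof compactly as: ``(i)$\Leftrightarrow$(ii) is the first proposition; (ii)$\Rightarrow$(iii) by the second proposition as above; $\neg$(ii)$\Rightarrow\neg$(iii) by the second proposition as above,'' filling in the one-line normalization remarks inline.
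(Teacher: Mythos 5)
Your proof is correct and follows exactly the route the paper intends: the corollary is stated without proof as an immediate consequence of the two preceding propositions, and your cycle of implications (first proposition for the equivalence of the first two bullets, both directions of the second proposition for the third bullet, plus the routine normalizations fixing $f(0)=p$ and $(F\circ f)'(0)=1$) is precisely that assembly. No gaps.
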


Recall that the uniqueness of complex geodesics is satisfied in the following classes (see e. g. \cite{Din 1989}, \cite{Pfl-Zwo 2018}, \cite{Lem 1981}): 
\begin{itemize}
\item bounded strictly convex domains in $\mathbb C^n$,\\
\item tube domains overs bounded strictly convex bases in $\mathbb R^n$,\\
\item strongly linearly convex domains in $\mathbb C^n$, $n>1$.
\end{itemize}

Below we shall see that for $D$ belonging to one of the three classes of domains the property as in Corollary~\ref{corollary:uniqueness-real-geodesics} is satisfied. This follows from the following property.

\begin{proposition}\label{proposition:indicatrix-strictly-convex} Let $D$ be a domain belonging to one of the classes
\begin{itemize}
\item bounded strictly convex domains in $\mathbb C^n$,\\
\item tube domains overs bounded strictly convex bases in $\mathbb R^n$,\\
\item strongly linearly convex domains in $\mathbb C^n$, $n>1$.
\end{itemize}
Fix $p\in D$. Then $I_D(p)$ is strictly convex.
\end{proposition}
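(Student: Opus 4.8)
The plan is to show that for each unit vector $X$ (i.e. $\kappa_D(p;X)=1$) the indicatrix has no supporting real hyperplane touching $\partial I_D(p)$ along a nontrivial segment, equivalently that the "face" of $\overline{I_D(p)}$ at $X$ reduces to $\{X\}$. Here convexity of $I_D(p)$ for the three listed classes is already classical (it follows from the Lempert theorem: $\kappa_D(p;\cdot)$ is a norm-like function whose unit ball is convex — indeed the indicatrix is, up to the identification, the image $f'(0)$ of derivatives of complex geodesics through $p$, and convexity is standard, e.g. from \cite{Jar-Pfl 2013}). So the only real content is \emph{strict} convexity, i.e. extremality of every boundary point.

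The key step is to reduce strict convexity of $I_D(p)$ to the uniqueness of complex geodesics, which holds in all three classes by the results recalled just before the statement (\cite{Din 1989}, \cite{Pfl-Zwo 2018}, \cite{Lem 1981}). First I would argue by contradiction: suppose $X_0=tX_1+(1-t)X_2$ with $t\in(0,1)$, $\kappa_D(p;X_j)=1$ for $j=0,1,2$, and $X_1\neq X_2$. Let $f_1,f_2:\mathbb D\to D$ be complex geodesics with $f_j(0)=p$ and $f_j'(0)=X_j$ (rescaling so the derivative is exactly $X_j$, using $\kappa_D=\gamma_D$). Let $F$ be a left inverse to the complex geodesic $h$ realizing the direction $X_0$. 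Then running the computation in the second Proposition above (the one with $\kappa_D(w; tf'(0)+(1-t)g'(0))=1$) shows that $F$ must be a \emph{common} left inverse for $f_1$, $f_2$ and $h$: one gets the chain $1=t\kappa_D(p;X_1)+(1-t)\kappa_D(p;X_2)\geq \kappa_D(p;X_0)\geq |F'(p)X_0|=|t\,F'(p)X_1+(1-t)F'(p)X_2|$, and since each $|F'(p)X_j|\leq 1$, equality forces $|F'(p)X_1|=|F'(p)X_2|=1$ with the same argument, so $F\circ f_1$ and $F\circ f_2$ are both rotations, i.e. $F$ is a left inverse for each. Thus $f_1$ and $f_2$ are two complex geodesics through $p$ sharing a common left inverse.

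Now I would invoke uniqueness of complex geodesics in the relevant class to force $f_1=f_2$ (up to reparametrization by an automorphism of $\mathbb D$ fixing $0$, i.e. a rotation), hence $X_1$ and $X_2$ are positive multiples of each other; combined with $\kappa_D(p;X_1)=\kappa_D(p;X_2)=1$ this gives $X_1=X_2$, a contradiction. For this last implication one has to be slightly careful that "uniqueness of complex geodesics" in the three classes is exactly the statement that through a given point in a given direction there is a unique (up to rotation) complex geodesic — which is precisely what the cited references give for bounded strictly convex domains, tubes over bounded strictly convex bases, and strongly linearly convex domains with $n>1$. The main obstacle, and the place where the three cases genuinely differ, is the infinitesimal uniqueness statement: for strictly convex domains it is immediate from the boundary behaviour of geodesics, but for tube domains and strongly linearly convex domains one must quote the appropriate uniqueness theorems carefully (and for strongly linearly convex domains the restriction $n>1$ is essential, since the disc itself is a degenerate case). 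Once infinitesimal uniqueness of complex geodesics is in hand, the argument above is purely formal and short.
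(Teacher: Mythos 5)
Your reduction is fine up to and including the point where you produce a common left inverse: if $[X_1,X_2]$ were a nontrivial segment in $\partial I_D(p)$ and $F$ is a left inverse of the geodesic $h$ for the midpoint direction, then the equality chain $1=|F'(p)X_0|=|tF'(p)X_1+(1-t)F'(p)X_2|$ with $|F'(p)X_j|\le 1$ does force $F$ to be a common left inverse of $f_1$ and $f_2$. This is exactly the paper's preceding Proposition. The gap is in the final step: uniqueness of complex geodesics does \emph{not} force $f_1=f_2$. Uniqueness of geodesics is a statement about a fixed pair $(p;X)$ (or a fixed pair of points); here $f_1$ and $f_2$ are geodesics through $p$ in \emph{different} directions $X_1\neq X_2$, and nothing in the uniqueness theorems prevents two such geodesics from sharing a left inverse. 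Indeed the diamond $\triangle$ itself is the counterexample to your implication: complex geodesics in $\triangle$ are uniquely determined (Jarnicki--Pflug--Zeinstra), yet $\lambda\mapsto(\lambda,0)$ and $\lambda\mapsto(0,\lambda)$ are distinct geodesics through $0$ with the common left inverse $z_1+z_2$, and $I_\triangle(0)=\triangle$ is not strictly convex. In the language of Corollary~\ref{corollary:uniqueness-real-geodesics}, what you need is the uniqueness property of \emph{right} inverses at $p$, which is strictly stronger than uniqueness of geodesics and is essentially equivalent to the strict convexity you are trying to prove -- so as written the argument is circular at this point.

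To close the gap you must actually use the geometry of each class, which is what the paper does. For bounded strictly convex domains and for tubes over bounded strictly convex bases: the maps $tf_1+(1-t)f_2$, $t\in[0,1]$, all take values in $D$ (convexity of $D$) and all have $F$ as a left inverse by the Schwarz lemma, hence are proper complex geodesics; their radial boundary values then produce, for almost every $\omega\in\mathbb T$, a nontrivial segment $[f_1^*(\omega),f_2^*(\omega)]$ contained in $\partial D$ (resp.\ in the boundary of the base), contradicting strict convexity of $D$ (resp.\ of the base). For strongly linearly convex domains convex combinations of geodesics need not stay in $D$, so one instead uses Lempert's representation: transform $D$ so that the geodesic for the direction $X_0$ becomes $\lambda\mapsto(\lambda,0')$ and the image domain $G$ sits inside $\mathbb B_n$; then $(1,0')\in\partial I_G(0)$, $I_G(0)\subset\mathbb B_n$, and a nontrivial boundary segment of $I_G(0)$ through $(1,0')$ is impossible because $(1,0')$ is a point of strict convexity of $\partial\mathbb B_n$.
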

\begin{proof} Assume first that $D$ belongs to one of the first two classes of domains. It is sufficient to show that $D$ has uniqueness property for right inverses at $p$. Suppose the opposite. Let $f,g:\mathbb D\to D$ be two different complex geodesics such that $f(0)=g(0)=p$ that have the same left inverse $F$. Then $F$ is the left inverse to all (complex geodesics) $tf+(1-t)g$, $t\in[0,1]$. This is an immediate consequence of the fact that $F(tf(\lambda)+(1-t)g(\lambda))=\lambda$, $t\in[0,1]$, $\lambda\in\mathbb D$, which is a direct consequence of the infinitesimal version of the Schwarz Lemma. Then $k_D(p,tf(\lambda)+(1-t)g(\lambda))=p(0,\lambda)$, $t\in[0,1]$, $\lambda\in\mathbb D$. For almost all $|\omega|=1$  (well-defined) elements $f^*(\omega)$, $g^*(\omega)$ (radial limits) are from $\partial D$ so are the segments $[f^*(\omega),g^*(\omega)]$ as complex geodesics are proper mappings.  This easily leads to a contradiction with the strict convexity of suitable sets.

In the case $D$ is strongly linearly convex we proceed as follows. Let $f:\mathbb D\to D$ be a geodesic with $f(0)=p$ such that $f^{\prime}(0)$ is an element from the open segment lying in the boundary of the indicatrix $I_D(p)$. Making use of the Lempert theory (see \cite{Lem 1981}) we may transform $D$ biholomorphically into a domain $G\subset\mathbb D\times\mathbb C^{n-1}$ so that
the transformed complex gedoesic is given by the formula $\varphi(\lambda)=(\lambda,0^{\prime})$ and $G$ is contained in the unit Euclidean ball $\mathbb B_n$. Then certainly $(1,0^{\prime})$ is lying in the boundary of $I_D(0)$ and our assumption implies that the open segment containing $(1,0^{\prime})$ lies in the boundary of $I_D(0)$. But $I_D(0)\subset \mathbb B_n$ - contradiction.
\end{proof}

\begin{remark} A direct consequence of the corollaries above is that in the case of strictly convex domains, strongly linearly convex ones or tube domains over strictly convex bases we have uniqueness property of right inverses and all the real geodesics are uniquely determined and are induced by the complex ones. 

We should make one more comment. The uniqueness of real geodesics in some class of domains and strict convexity of indicatrices could possibly be contained in some papers, some of the subcases could also be the folklore among the experts; however, we could not find the results as formulated above in the existing literature, we could only find special cases (compare e. g. Lemma 3.3 in \cite{Gau-Ses 2013} for the $C^3$-smooth strongly convex case).
\end{remark}

Note that the above result implies the following generalization of Lemma 5 in \cite{Kos-Zwo 2018}. As the result is not essential in the sequel we do not get deeper into its content (we refer the interested Reader to that paper for recalling the notions used).

\begin{corollary} 
Let $D$ be a bounded strictly convex domain in $\mathbb C^n$.  Assume that two of three $2$-point subproblems of the $3$-
point Pick problem
$D\to\mathbb D$ 
$z_j\to\sigma_j$, $j=1,2,3$ are extremal. Then $z_1, z_2,z_3\in D$ lie on a common complex geodesic.
\end{corollary}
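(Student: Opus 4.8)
The plan is to reduce the statement to the uniqueness of real geodesics in $D$, which is available by the Remark above (a bounded strictly convex domain has the uniqueness property of right inverses at every point, and all real geodesics are induced by complex ones). First I would fix notation: say the Pick problem $D \to \mathbb D$, $z_j \mapsto \sigma_j$, $j=1,2,3$, is such that the two-point subproblems on $\{z_1,z_2\}$ and $\{z_1,z_3\}$ are extremal; that is, there is a holomorphic $F:D\to\mathbb D$ solving the full three-point problem (so $F(z_j)=\sigma_j$), and $p(\sigma_1,\sigma_2)=k_D(z_1,z_2)$, $p(\sigma_1,\sigma_3)=k_D(z_1,z_3)$. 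After composing with an automorphism of $\mathbb D$, I may assume $\sigma_1=0$.

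Next I would produce the two complex geodesics. By extremality of the $\{z_1,z_2\}$ subproblem and the Lempert property, let $f:\mathbb D\to D$ be a complex geodesic with $f(0)=z_1$ and $f(t_2)=z_2$ for the appropriate $t_2\in(0,1)$ with $p(0,t_2)=p(0,\sigma_2)$; after a rotation of the disc we may assume $\sigma_2=t_2$, so $F(f(0))=0$ and $F(f(t_2))=t_2$. The Schwarz–Pick lemma then forces $F\circ f=\mathrm{id}_{\mathbb D}$, i.e.\ $F$ is a left inverse of $f$. Likewise let $g:\mathbb D\to D$ be a complex geodesic through $z_1$ and $z_3$; arguing the same way (and using that $F$ already solves the three-point problem), $F\circ g=\mathrm{id}_{\mathbb D}$ as well, so $F$ is a common left inverse of both $f$ and $g$, with $f(0)=g(0)=z_1$.

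Now I would invoke the uniqueness of right inverses at $z_1$: since $D$ is bounded strictly convex, $D$ has the uniqueness property of right inverses at $z_1$ by Proposition~\ref{proposition:indicatrix-strictly-convex} and the Remark following Corollary~\ref{corollary:uniqueness-real-geodesics}. Because $F$ is a common left inverse of $f$ and $g$ with $f(0)=g(0)=z_1$, this forces $f=g$ (as complex geodesics, i.e.\ up to an automorphism of $\mathbb D$ we may take them literally equal). Hence $z_1=f(0)$, $z_2=f(t_2)$ and $z_3=g(t_3)=f(t_3)$ all lie on the single complex geodesic $f(\mathbb D)$, which is the claim.

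The only delicate point is the bookkeeping of disc automorphisms: one must be careful that the normalizations made for the $\{z_1,z_2\}$ problem (placing $\sigma_2$ on the real axis) are compatible with those for the $\{z_1,z_3\}$ problem. This is handled by fixing a single $F$ solving the three-point problem first, normalizing it once so that $\sigma_1=0$, and then choosing the parametrizations of $f$ and $g$ so that $f(0)=g(0)=z_1$; the Schwarz–Pick rigidity then pins down $F\circ f$ and $F\circ g$ to be the \emph{same} automorphism of $\mathbb D$ (namely, after a final rotation, the identity), so no incompatibility arises. Everything else is a direct application of the Lempert property and the already-established uniqueness results, so there is no serious obstacle.
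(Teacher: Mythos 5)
Your proof is correct and is exactly the argument the paper intends: the corollary is stated there without proof as a consequence of the uniqueness property of right inverses in bounded strictly convex domains (Proposition~\ref{proposition:indicatrix-strictly-convex} and the Remark following Corollary~\ref{corollary:uniqueness-real-geodesics}), and your reduction --- relabel so that the two extremal subproblems share the node $z_1$, use Schwarz--Pick rigidity to turn a solution $F$ of the three-point problem into a common left inverse of the two geodesics through $z_1$, and conclude $f=g$ --- is precisely that application. You are also right to read solvability of the full three-point problem into the hypothesis; without a single $F$ interpolating all three nodes the statement fails (e.g.\ $z_1=(0,0)$, $z_2=(1/2,0)$, $z_3=(0,1/2)$ in $\mathbb B_2$ with $\sigma_1=0$, $\sigma_2=\sigma_3=1/2$).
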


In the case a pseudoconvex balanced domain is biholomorphic to a convex domain we easily get that the domain itself must be convex (use the fact that the indicatrix of the pseudoconvex balanced domain at $0$ is the domain itself). Similar reasoning allows us to conclude from Proposition~\ref{proposition:indicatrix-strictly-convex} the following result that may be seen as a generalization of the Poincar\'e theorem on holomorphic inequivalence of the ball and the polydisc.

\begin{corollary}
Let $D$ be a pseudoconvex balanced domain that is biholomorphic to a strictly convex bounded domain. Then $D$ is strictly convex.
\end{corollary}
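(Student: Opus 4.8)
The plan is to combine two ingredients: the biholomorphic invariance of the indicatrix, together with the classical identification of the indicatrix of a pseudoconvex balanced domain at the origin with the domain itself.

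First I would record the following standard fact: if $D\subset\mathbb C^n$ is a pseudoconvex balanced domain, then $I_D(0)=D$. Indeed, for any balanced domain the Minkowski functional $h_D$ satisfies $D=\{X\in\mathbb C^n:h_D(X)<1\}$, and when $D$ is in addition pseudoconvex one has $\kappa_D(0;X)=h_D(X)$ for every $X\in\mathbb C^n$ (see e.g. \cite{Jar-Pfl 2013}); hence $I_D(0)=\{X:\kappa_D(0;X)<1\}=\{X:h_D(X)<1\}=D$.

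Next, let $\Phi:D\to\Omega$ be a biholomorphism onto a bounded strictly convex domain $\Omega$. Since the Kobayashi metric is a biholomorphic invariant, $\kappa_\Omega(\Phi(z);\Phi'(z)X)=\kappa_D(z;X)$, so the linear isomorphism $\Phi'(0)$ maps $I_D(0)$ onto $I_\Omega(\Phi(0))$. By Proposition~\ref{proposition:indicatrix-strictly-convex} applied to $\Omega$, which belongs to the class of bounded strictly convex domains, the set $I_\Omega(\Phi(0))$ is strictly convex; it is moreover bounded because $\Omega$ is. Therefore $D=I_D(0)=(\Phi'(0))^{-1}\bigl(I_\Omega(\Phi(0))\bigr)$ is the image of a bounded strictly convex domain under a linear isomorphism, and consequently $D$ is itself bounded and strictly convex.

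The argument is short and essentially without obstacles; the only points requiring care are that $\Phi'(0)$ is nonsingular (automatic for a biholomorphism) and invoking the correct classical statement $\kappa_D(0;\cdot)=h_D$ for pseudoconvex balanced $D$. Note that, in the spirit of the remark preceding the statement, no regularity or tautness assumption on $D$ itself is needed, since Proposition~\ref{proposition:indicatrix-strictly-convex} is used only for the strictly convex target $\Omega$.
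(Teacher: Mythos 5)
Your proof is correct and is precisely the argument the paper has in mind: it invokes the identity $I_D(0)=D$ for pseudoconvex balanced domains (via $\kappa_D(0;\cdot)=h_D$), the fact that the derivative of a biholomorphism at a point maps indicatrix onto indicatrix, and Proposition~\ref{proposition:indicatrix-strictly-convex} applied to the bounded strictly convex target. The paper only sketches this in the sentence preceding the corollary, so your write-up is simply the detailed version of the same route; no further comment is needed.
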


We thank N. Nikolov for drawing our attention to the above conclusion.

\section{(Infinitesimal)  complex isometries} For holomorphically invariant functions we may consider the complex isometries with respect to them. More precisely, 
for bounded domains $D\subset\mathbb C^n$, $G\subset\mathbb C^m$ we call the mapping $F:D\to G$ to be a {\it $d$-isometry} if $d_G(F(w),F(z))=d_D(w,z)$ ($d$ may be quite arbitrary holomorphically invariant function like $c$ or $k$). 
A $C^1$-smooth mapping $F:D\to G$ is a {\it $\delta$-isometry} if $\delta_G(F(w);d_w F(X))=\gamma_D(w;X)$ for any $w\in D$, $X\in\mathbb R^{2n}=\mathbb C^n$, where $d_w$ denotes the (real) Frechet derivative of $F$ at $w$ (and $\delta$ may be quite arbitrary holomorphically invariant metric like $\gamma$ or $\kappa$). Note that if $F:D\mapsto G$ is $C^1$-smooth and $F$ is a $k$-isometry (respectivley, $c$-isometry) then $F$ is a $\kappa$-isometry (respectively, $\gamma$-isometry). 

 In the case of Lempert domains where the holomorphically invariant functions coincide we may omit the letter $d$ or $\delta$ and we may speak about ({\it infinitesimal}) {\it complex isometries}.

Below we formulate a result going beyond the class of Lempert domains so we denote $I_D^{\delta}(w):=\{X\in\mathbb C^n:\delta_D(w;X)<1\}$. Recall that the Carath\'eodory indicatrix $I_D^{\gamma}(w)$ is a convex balanced domain. In the case of Lempert domains we certainly denote the indicatrix without the superscript '$\gamma$'. 

The obvious consequence of the just introduced definition is the following.

\begin{proposition} Let $F:D\to G$ be a $\delta$-isometry. Then $d_wF(I_D^{\delta}(w))=I_G^{\delta}(F(w))\cap d_wF(\mathbb C^n)$.
\end{proposition}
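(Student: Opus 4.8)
The plan is to unwind the definitions and use that $d_wF$ is a linear isomorphism of $\mathbb C^n$ onto the subspace $d_wF(\mathbb C^n)\subset\mathbb C^m$. First I would fix $w\in D$ and abbreviate $L:=d_wF$, $w':=F(w)$. The $\delta$-isometry hypothesis says precisely that $\delta_G(w';L(X))=\delta_D(w;X)$ for every $X\in\mathbb C^n$. Thus $\delta_D(w;X)<1$ if and only if $\delta_G(w';L(X))<1$; that is, $X\in I_D^{\delta}(w)$ if and only if $L(X)\in I_G^{\delta}(w')$. The first inclusion $L(I_D^{\delta}(w))\subseteq I_G^{\delta}(w')\cap L(\mathbb C^n)$ is then immediate: every point of the left side is $L(X)$ with $X\in I_D^{\delta}(w)$, hence lies in $I_G^{\delta}(w')$ by the equivalence just noted, and trivially lies in $L(\mathbb C^n)$.

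For the reverse inclusion, take $Y\in I_G^{\delta}(w')\cap L(\mathbb C^n)$. Since $Y\in L(\mathbb C^n)$, write $Y=L(X)$ for some $X\in\mathbb C^n$ (no injectivity of $L$ is needed here, only that $Y$ is in the image). Then $\delta_G(w';L(X))=\delta_D(w;X)$ by the isometry property, and $\delta_G(w';Y)<1$ gives $\delta_D(w;X)<1$, so $X\in I_D^{\delta}(w)$ and therefore $Y=L(X)\in L(I_D^{\delta}(w))$. This shows $I_G^{\delta}(w')\cap L(\mathbb C^n)\subseteq L(I_D^{\delta}(w))$, and combining the two inclusions yields the claimed equality $d_wF(I_D^{\delta}(w))=I_G^{\delta}(F(w))\cap d_wF(\mathbb C^n)$.

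There is essentially no obstacle; the statement is a direct translation of the definition of $\delta$-isometry into the language of sublevel sets of $\delta_D(w;\cdot)$ and $\delta_G(w';\cdot)$. The only point worth a moment's care is that $d_wF$ need not be injective a priori, so one should phrase the argument in terms of ``$Y$ is in the image of $L$'' rather than ``apply $L^{-1}$''; the argument above does exactly this. (In the applications of interest $d_wF$ will in fact be injective because $\delta_D(w;\cdot)$ is a metric, so $L(X)=0$ forces $\delta_D(w;X)=\delta_G(w';0)=0$, hence $X=0$; but this is not needed for the proposition as stated.)
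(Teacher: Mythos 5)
Your proof is correct and is exactly the direct unwinding of the definition that the paper has in mind; the paper states this proposition as an ``obvious consequence'' of the definition of $\delta$-isometry and gives no proof at all. Your care about not needing injectivity of $d_wF$ is a sensible touch but does not change the substance.
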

\begin{remark} Note that in the case the domains $D$ and $G$ being Lempert and of equal dimension
the real linear mapping (isomorphism) $d_wF$ from the previous result preserves the points of strict convexity of the boundary of the indicatrix -- note that this shows some kind of Poincar\'e theorem on biholomorphic inequivalence of the ball and polydisc.
\end{remark}

\section{The case of the domain $\triangle$}

Recall that the {\it diamond} is the domain $\triangle:=\{z\in\mathbb C^2:|z_1|+|z_2|<1\}$.

All complex geodesics $f=(f_1,f_2)$ of $\triangle$ are of the following form $f_j(\lambda)=a_j\left(\frac{\lambda-\alpha_j}{1-\overline{\alpha_j}\lambda}\right)^{r_j}\left(\frac{1-\overline{\alpha_j}\lambda}{1-\overline{\alpha_0}\lambda}\right)^2$ where $a_j\in\mathbb C$, $r_j\in\{0,1\}$, $\alpha_j\in\overline{\mathbb D}$ but if $r_j=1$ then $\alpha_j\in\mathbb D$ and the relations as in \cite{Jar-Pfl-Zei 1993} (see also \cite{Gen 1987}) are satisfied. For the given geodesic $f$ denote $A:=\{j\in\{1,2\}:r_j=1\}$.

\subsection{Left inverses and the formula for $\kappa_{\triangle}$}
By elementary calculations, we can see that functions $z_1+\omega z_2$ where $|\omega|=1$ are left inverses to (all) geodesics of $\triangle$ for which $A=\{1,2\}$. 
Moreover, the functions $z_1+\omega z_2$ are Carath\'eodory extremal (i. e. the supremum in the definition of the Carath'eodory distance is attained) for the pairs $(\tau t,0)$ and $(0,-\overline{\omega} \tau s)$ for $t,s\in[0,1)$ so the above easily gives the following formula for $k_{\triangle}$
\begin{equation}
    k_{\triangle}((z_1,0),(0,z_2))=p(-|z_1|,|z_2|),\; z_1,z_2\in\mathbb D.
\end{equation}

As the indicatrix $I_{\triangle}(0)=\triangle$ is not strictly convex we easily construct a real geodesic not coming from a complex one.
\begin{example}
The curve $\gamma$ defined by the formula
$\gamma(t):=(t,0)$, $t\in(-1,0]$, $\gamma(t):=(0,t)$, $t\in[0,1)$ is a real geodesic in $\triangle$ with the left inverse $z_1+z_2$. 
\end{example}

The mappings of the type $z\to (z_1^{p_1},z_2^{p_2})$ where $p_1,p_2\in\{1,2\}$ give the branched covering (proper holomorphic mappings of multiplicity $p_1p_2$) between $\mathcal E(p_1/2,p_2/2):=\{z\in\mathbb C:|z_1|^{p_1}+|z_2|^{p_2}<1\}$ and $\mathcal E(1/2,1/2)=\triangle$ and give the inequality
\begin{equation}
\kappa_{\mathcal E(p_1/2,p_2/2)}(z;X)\geq \kappa_{\triangle}((z_1^{p_1},z_2^{p_2});(p_1z_1^{p_1-1}X_1,p_2z_2^{p_2-1}X_2)).
\end{equation}
The inequality above becomes equality in the case the geodesic in $\mathcal E(p_1/2,p_2/2)$ for the pair $(z;X)$ (denote it by $f$) has the property that $f_j$ has no zero in $\triangle$ for $p_j=2$. This follows directly from the form of complex geodesics in convex complex ellipsoids.

On the other hand if the complex geodesic in $\triangle$ for the pair $(z;X)$ has zeroes for both coordinates (i. e. $r_1=r_2=1$) then we get the formula:
\begin{equation}
\kappa_{\triangle}(z;X)=\sup\left\{\frac{|X_1+\omega X_2|}{1-|z_1+\omega z_2|^2}:|\omega|=1\}\right\}=:m_{\triangle}(z;X).
\end{equation}

Consequently we get the following (almost) effective formula for the Kobayashi-Royden metric of $\triangle$.
Namely,  $\kappa_{\triangle}(z;X)$ is the maximum of the value $m_{\triangle}(z;X)$ and the minimum of the following values (excluding below the expressions when $z_j=0$ in the denominator)
\begin{itemize}
\item $\kappa_{\mathcal E(1/2,1)}\left(\left(z_1,\sqrt{z_2}\right);\left(X_1,\frac{X_2}{2\sqrt{z_2}}\right)\right)$,\\
\item $\kappa_{\mathcal E(1,1/2)}\left(\left(\sqrt{z_1},z_2\right);\left(\frac{X_1}{\sqrt{z_1}},X_2\right)\right)$,\\
\item $\kappa_{\mathbb B_2}\left(\left(\sqrt{z_1},\sqrt{z_2}\right);\left(\frac{X_1}{2\sqrt{z_1}},\frac{X_2}{2\sqrt{z_2}}\right)\right)$.
\end{itemize}
As the formulas for $\kappa_{\mathbb B_2}$ and $\kappa_{\mathcal E(1,1/2)}$ are well known (for the latter see \cite{Bla-Fan-Kle-Kra-Ma 1992}) we have some quisieffective formula for $\kappa_{\triangle}$.


\subsection{Smooth Kobayashi isometries of $\triangle$}
Let us recall that the holomorphic or antiholomorphic isomorphisms of the domain are trivially Kobayashi isometries in the domains. The problem whether Kobayashi isometries or embeddings must be (anti)holomorphic has been intensively studied (see e. g. \cite{Kuc-Ray 1988}, \cite{Zwo 1993}, \cite{Zwo 1995}, \cite{Ses-Ver 2006}, \cite{Mah 2012}, \cite{Gau-Ses 2013}, \cite{Ant 2017}, \cite{Lemm 2022}, \cite{Kim-Seo 2022}, \cite{Edi 2019}, \cite{Edi 2023}). The example of the bidisc and the mapping $\mathbb D^2\owns z\to(z_1,\overline{z_2})\in\mathbb D^2$ shows that the Kobayashi isometries may be of different form. There is also a conjecture by Gaussier-Seshadri (\cite{Gau-Ses 2017}) that in most cases of the convex domains the isometries must be (anti)holomorphic. The most positive result in this direction is due to Edigarian (\cite{Edi 2019}) where the author proved that this is really the case for strictly convex domains. Below we show that in $\triangle$ the result is also valid which suggests that the above mentioned conjecture may be true. Very recently A. Edigarian showed that the same result holds for the symmetrized bidisc, that though not biholomorphic to a convex domain, is a Lempert domain and thus it is an important example that is studied in the complex analysis. There is some (non-obvious) similarity between the domain $\triangle$ and $\mathbb G_2$. Namely, the indicatrix at zero is (up to a muliplication of coordinates) equal to $\triangle$.


\begin{proof}[Proof of Theorem~\ref{theorem:isometry}] We give the proof in several steps. Let $F$ be as in the theorem.

{\bf Step 1.} $F(0,0)=(0,0)$. 

Suppose that $F(0,0)=(z_1,z_2)\in\triangle$, $z_1\neq 0$. The explicit formulas for the complex geodesics allow us to find $X_0$ with $\kappa_{\triangle}(z;X_0)=1$ such that for all $X$ from some neighbourhood $U$ of $X_0$ and such that $\kappa_{\triangle}(z;X)=1$ the complex geodesic $f$ for the pair $(z;X)$ is such that $f_1$ never vanishes on $\mathbb D$. We already know that in such a situation we have the equality for $X$ as above 
\begin{equation}
1=\kappa_{\triangle}(z;X)=\kappa_{\mathcal E(1,1/2)}\left((\sqrt{z_1},z_2);\left(\frac{X_1}{2\sqrt{z_1}},X_2\right)\right).
\end{equation}
Recall that the indicatrix $I_{\mathcal E(1,1/2)}(w)$ is always strictly convex (this follows from strict convexity of $\mathcal E(1,1/2)$ and Proposition~\ref{proposition:indicatrix-strictly-convex}). Consequently, as the non-empty open set (in the relative topology of the boundary of the indicatrix) of strictly convex points is preserved under the linear isomorphism $d_{(0,0)}F$ we would get a non-empty open portion of strictly convex points in the boundary of $I_{\triangle}(0,0)=\triangle$ which gives a contradiction.

{\bf Step 2.} $F$ maps the axes $S:=\mathbb D\times\{0\}\cup\{0\}\times\mathbb D$ to the same set. One may therefore conclude that $F$ when restricted to the set $S$ maps (up to a rotation and permutation of variables) points $(z_1,0)$ to $(z_1,0)$ or $(\overline{z_1},0)$ and points $(0,z_2)$ to $(0,z_2)$ or $(0,\overline{z_2})$. In particular, we may assume that $F_1(z_1,0)|=|z_1|$ and $|F_2(0,z_2)|=|z_2|$, $z_1,z_2\in\mathbb D$.

From the isometricity of $F$ and the fact that $F(0,0)=(0,0)$ we get that for all $z\in\triangle$ we have $|F_1(z)|+|F_2(z)|=|z_1|+|z_2|$.

Consider the (real) geodesic $(-1,1)\omega\times\{0\}$, where $|\omega|=1$. Suppose that for some $t_0\in(-1,1)$, $t_0\neq 0$ $F(t_0\omega,0)=t_0(z_1,z_2)\in\triangle$, $z_1z_2\neq 0$, $|z_1|+|z_2|=1$. Note that the geodesic segment $[0,t_0]\omega\times\{0\}$ is uniquely determined in the sense that any point $w\in\triangle$ such that $k_{\triangle}((0,0),t_0(\omega,0))=k_{\triangle}(0,w)+k_{\triangle}(w,(t_0\omega,0))$ must be from $[0,t_0]\omega\times\{0\}$. It easily follows from theisometricity that the real geodesic segment $F([0,t_0]\omega\times\{0\})$ has the same property, which also means that the latter segment comes from a complex geodesic joining $(0,0)=F(0,0)$ with $t_0z$. But such a segment must be the Euclidean segment joining $(0,0)$ with $t_0z$. Then we easily get that the real derivative $d_{(0,0)}F$ maps $(\omega,0)$ to $z$. That gives us however contradiction as the point $(\omega,0)$ is of strict convexity of $\partial\triangle$ whereas the other one (the point $z$) is not.
 
{\bf Step 3.} We show that $|F_1(z_1,z_2)|=|z_1|$ and $|F_2(z_1,z_2)|=|z_2|$, $z\in\triangle$.

One may verify that $k_{\triangle}((z_1,0),(z_1,z_2))=p\left(0,\frac{|z_2|}{1-|z_1|}\right)$, $z\in\triangle$.

Fix for a while $z\in\triangle$. Let $F(z_1,z_2)=(w_1,w_2)$. Isometricity of $F$ applied to $(0,0)$ and $z$ gives that $|w_1|+|w_2|=|z_1|+|z_2|$. On the other hand by considering the function $G(v):=\frac{v_2}{1-\tau v_1}$, where $\tau w_1=|w_1|$, $|\tau|=1$ we get that
\begin{multline}
p\left(0,\frac{|z_2|}{1-|z_1|}\right)=k_{\triangle}((z_1,0),(z_1,z_2))=k_{\triangle}(F(z_1,0),(w_1,w_2))=\\
k_{\triangle}((\omega z_1,0),(w_1,w_2))\geq p(G(\omega z_1,0),G(w_1,w_2))=p\left(0,\frac{w_2}{1-|w_1|}\right).
\end{multline}
Consequently,
\begin{equation}
\frac{|w_2|}{1-|w_1|}\leq\frac{|z_2|}{1-|z_1|}.
\end{equation}
Substituting, $|w_2|=|z_1|+|z_2|-|w_1|$ we get after elementary transformations that $|z_1|\leq|w_1|$. Analoguously we get that $|z_2|\leq|w_2|$, which implies that $|z_1|=|w_1|$ and $|z_2|=|w_2|$ and finishes this step.

{\bf Step 4.} $F$ maps every segment $[(0,0),z]$, where $|z_1|+|z_2|=1$, to a segment $[(0,0),(\omega_1z_1,\omega_2z_2)]$ by the formula $tz\to t(\omega_1z_1,\omega_2z_2)$.

Actually, consider any $|z_1|+|z_2|=1$, $z_j\neq 0$. We already know that $F(tz)=t(\omega_1(t) z_1,\omega_2(t)z_2)$ for some $|\omega_j(t)|=1$ and the set $\{F(tz):t\in(-1,1)\}$ is a real geodesic passing through $(0,0)$. 


Moreover, a left inverse to the real geodesic passing through $(0,0)$ is of the form $\tau_1v_1+\tau_2 v_2$ so for any $t\in(-1,1)$ we have $\omega_1(t)\tau_1z_1+\omega_2(t)\tau_2z_2=1$, so $\omega_j(t)\tau_j=\frac{|z_j|}{z_j}$, which easily finishes the proof.

{\bf Step 5.} Note that it follows from the previous remark that $F(tz)=td_{(0,0)}F(z)$, for any $z\in\overline{\triangle}$, $t\in(-1,1)$ so $F$ a real linear mapping that is determined by its values on the set $S$. Therefore, up to a permutation, conjugation (of both variables) and rotations, it must be either identity or of the form $F(z_1,z_2)=(z_1,\overline{z_2})$, $\in\triangle$. We will show that the later case is not possible.

Actually, suppose that $F$ defined by the latter formula is an isometry. It follows from the form of geodesics that the map $\mathbb B_2\owns z\to (z_1^2,z_2^2)\in\triangle$ preserves the Kobayashi distance for pairs $w,z\in\mathbb B_2$ that are connected by complex geodesics intersecting no axes. It is an elementary observation to see that one may find two non-empty open sets $U_1,U_2$ in $\mathbb B_2$ that are invariant under the mapping $v\to (v_1,\overline{v_2})$ and the geodesics joining points from $U_1$ with that from $U_2$ omit both axes. Then for $w\in U_1$, $z\in U_2$ we get
\begin{multline}
    k_{\mathbb B_2}(w,z)=k_{\triangle}(w_1^2,w_2^2),(z_1^2,z_2^2))=k_{\triangle}((w_1^2,\overline{w_2}^2),(z_1^2,\overline{z_2}^2))=\\
    k_{\mathbb B_2}((w_1,\overline{w_2}),(z_1,\overline{z_2})).
\end{multline}
As the mapping $(w,z)\to k_{\mathbb B_2}^2(w,z)$ is real analytic we get by the identity principle that the mapping $z\to(z_1,\overline{z_2})$ is a Kobayashi isometry of the ball $\mathbb B_2$ -- contradiction.

\end{proof}

\end{document}